\documentclass{amsart}
\usepackage[utf8]{inputenc}
\usepackage{amsmath}
\usepackage{amssymb}
\usepackage{caption}
\usepackage{amsthm}
\usepackage[hidelinks]{hyperref}
\usepackage{cleveref}
\usepackage[justification=centering]{caption}
\crefname{lemma}{Lemma}{Lemmas}
\crefname{theorem}{Theorem}{Theorems}
\usepackage{xcolor}
\usepackage{comment}
\usepackage{graphicx}
\usepackage{tikz}
\usepackage{tikz-cd}
\usepackage{mathrsfs}
\usepackage{enumitem}
\usepackage[T1]{fontenc}
\usetikzlibrary{shapes.geometric}
\makeatletter
\def\@settitle{\begin{center}%
  \baselineskip14\p@\relax
  \bfseries
  \uppercasenonmath\@title
  \@title
  \ifx\@subtitle\@empty\else
     \\[1ex]\uppercasenonmath\@subtitle
     \footnotesize\mdseries\@subtitle
  \fi
  \end{center}%
}
\def\subtitle#1{\gdef\@subtitle{#1}}
\def\@subtitle{}
\makeatother

\newtheorem{theorem}{Theorem}[section]

\newtheorem{lemma}[theorem]{Lemma}
\newtheorem{proposition}[theorem]{Proposition}

\theoremstyle{remark}
\newtheorem{remark}[theorem]{Remark}

\theoremstyle{remark}

\usepackage{chngcntr}
\usepackage{graphicx} 
\usepackage{float}
\counterwithout{equation}{section}
\counterwithout{theorem}{section}

\begin{document}

\title{Convex cocompact right-angled Gromov--Thurston polyhedra}
\author{Sami Douba}

\begin{abstract}
Lee and Marquis exhibited convex cocompact hyperbolic reflection groups in dimension $5$ whose limit sets are homeomorphic to the $3$-sphere, but none of whose finite-index subgroups can be realized as $4$-dimensional real hyperbolic lattices. Using different methods, we furnish right-angled examples.
\end{abstract}

\address{Mathematisches Institut der Universit\"at Bonn, Endenicher Allee 60, 53115 Bonn, Germany}
\email{douba@math.uni-bonn.de}

\maketitle

The action of the isometry group $\mathrm{Isom}(\mathbb{H}^d)$ of the real hyperbolic $d$-space $\mathbb{H}^d$ on the latter space extends to the visual boundary $\partial_\infty \mathbb{H}^d$ of $\mathbb{H}^d$, that is, to the round sphere $\mathbb{S}^{d-1}$ at infinity, providing for $d \geq 3$ an identification between $\mathrm{Isom}(\mathbb{H}^d)$ and the conformal group $\mathrm{Conf}(\mathbb{S}^{d-1})$ of $\mathbb{S}^{d-1}$. The {\em limit set} of $\Gamma$ is the $\Gamma$-invariant closed subset of $\partial_\infty\mathbb{H}^d$ consisting of all accumulation points of the $\Gamma$-orbit $\Gamma o$ for some (any) point ~$o \in \mathbb{H}^d$. One says $\Gamma$ is {\em convex cocompact} if $\Gamma$ acts cocompactly on some nonempty closed convex $\Gamma$-invariant subset of $\mathbb{H}^d$. In the latter case, the discrete subgroup $\Gamma$ is Gromov-hyperbolic as an abstract group, and the limit set of $\Gamma$ in $\partial_\infty \mathbb{H}^d$ is $\Gamma$-equivariantly homeomorphic to the Gromov boundary of $\Gamma$. In particular, if a convex cocompact subgroup $\Gamma < \mathrm{Isom}(\mathbb{H}^d)$ is (or admits a finite-index subgroup that is) abstractly isomorphic to a cocompact lattice in $\mathrm{Isom}(\mathbb{H}^{d-1})$, then the limit set of $\Gamma$ in $\partial_\infty\mathbb{H}^d$ is a topological $(d-2)$-sphere. In this note, we show the following.

\begin{theorem}\label{main}
There are convex cocompact right-angled reflection groups in $\mathrm{Isom}(\mathbb{H}^5)$ whose limit sets in $\partial_\infty\mathbb{H}^5$ are homeomorphic to $S^3$ but none of whose finite-index subgroups embed discretely in $\mathrm{Isom}(\mathbb{H}^4)$.
\end{theorem}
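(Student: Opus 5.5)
We will build the examples from right-angled Gromov--Thurston-type polyhedra, as the title suggests. Start with a compact right-angled polyhedron $P \subset \mathbb{H}^4$, for instance the right-angled $120$-cell. Take a finite-index torsion-free subgroup of the reflection group, or more concretely a cyclic branched-cover construction: choose a totally geodesic codimension-two "cut" made of facets of $P$ arranged so that $k$ copies of $P$ glue cyclically around a codimension-two face configuration. The resulting right-angled polyhedral complex $Q_k$, a Gromov--Thurston manifold or orbifold, carries a locally CAT$(0)$ (in fact CAT$(-1)$ away from the branch locus) metric but no hyperbolic metric once $k \ge 2$. Its orbifold fundamental group $W_k$ is a right-angled Coxeter group whose nerve is a flag triangulation of $S^3$, since $Q_k$ is a $4$-manifold built from right-angled cells. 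So $W_k$ is a hyperbolic group (no squares in the nerve, inherited from $P$) with Gromov boundary $S^3$, by Davis's theorem that the boundary of a right-angled Coxeter group with nerve a flag PL $(n-1)$-sphere and hyperbolic Davis complex is $S^{n-1}$.

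The first main step is to realize $W_k$ as a convex cocompact reflection group in $\mathrm{Isom}(\mathbb{H}^5)$. I would write down explicit hyperplanes in $\mathbb{H}^5$, one for each generator, with prescribed intersection pattern: orthogonal when the generators commute, and disjoint (ultraparallel) otherwise. The natural approach is to "bend" or inflate the $\mathbb{H}^4$ configuration of $P$ into one extra dimension. Concretely, take $k$ copies of the facet hyperplanes of $P$ in $\mathbb{H}^4 \subset \mathbb{H}^5$ and rotate them about suitable codimension-two subspaces, so that the copies separate. Then verify the Gram matrix condition: entries $0$ for adjacent generators and $<-1$ for nonadjacent ones. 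By Vinberg's theory, the reflection group in the resulting (infinite-volume) right-angled polyhedron is discrete with that polyhedron as fundamental domain. Convex cocompactness follows because there are no ideal vertices and nonadjacent walls are at positive distance; equivalently, one checks that every parabolic-free, no-$\mathbb{Z}^2$ right-angled polytope with pairwise ultraparallel non-adjacent facets gives a convex cocompact group, e.g.\ by truncating with the walls' common perpendiculars. The limit set is then equivariantly homeomorphic to $\partial W_k \cong S^3$.

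The second step is the obstruction. Suppose a finite-index subgroup $\Gamma' < W_k$ embeds discretely in $\mathrm{Isom}(\mathbb{H}^4)$. Since $\partial\Gamma' = S^3$, the image has limit set all of $\partial\mathbb{H}^4$. It is torsion-free after passing to a further subgroup, it has cohomological dimension $4$, and it is a $PD(4)$ group, because $Q_k$ is aspherical closed up to finite cover. So the quotient $\mathbb{H}^4/\Gamma'$ is a manifold with fundamental group a $PD(4)$ group. By the standard argument (cd $4$ forces the quotient to be compact, since a noncompact aspherical $4$-manifold has cd $\le 3$), $\Gamma'$ is a cocompact lattice. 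Then Mostow rigidity, or more simply the simplicial volume or Gromov's argument that $Q_k$ is not homotopy equivalent to a hyperbolic manifold, gives a contradiction. I would use the standard Gromov--Thurston route: a hyperbolic manifold homotopy equivalent to the $k$-fold cover would have volume proportional to simplicial volume, which is multiplicative in covering degree and incompatible with the branched structure (Gromov--Thurston's original argument). Alternatively, I would exhibit a $\mathbb{Z}/k$ symmetry and a totally geodesic branch locus forcing, via Mostow, an isometric rotation of angle $2\pi/k$ about a hyperbolic surface whose fixed set contradicts the right angles.

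I expect the main obstacle to be the first step: producing explicit hyperplane configurations in $\mathbb{H}^5$ that satisfy the ultraparallel inequalities for all nonadjacent pairs across different copies. I would try first a computer-checkable Gram matrix for the smallest workable $P$ and $k=2$, then argue by continuity in a bending parameter.
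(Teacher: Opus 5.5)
Both halves of your plan have a genuine gap. For the realization in $\mathrm{Isom}(\mathbb{H}^5)$ you do not actually give a construction, and the fallback you propose cannot carry the argument. A continuity argument in a bending parameter only propagates the open conditions (ultraparallelism), not the exact orthogonalities. It also has no valid starting point, since copies of the walls of $P$ wound more than once around a codimension-two face do not bound a polyhedron in $\mathbb{H}^4\subset\mathbb{H}^5$.

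The missing idea is to realize first not the right-angled group but the Coxeter group $W^n(P,H_1,H_2)$, in which one right angle of $P$ is replaced by $\pi/(2n)$. Regard $\mathbb{H}^4$ as a round ball in $\mathbb{S}^4=\partial_\infty\mathbb{H}^5$, and replace two orthogonal walls $H_1,H_2$ by suitable components $H_1',H_2'$ of $\partial N_1,\partial N_2$, where $N_j$ is the closed $L$-neighborhood of $H_j$. These are round $3$-spheres of $\mathbb{S}^4$, they stay orthogonal to every wall orthogonal to $H_j$, and for a suitable $L=L(n)$ they meet at angle $\pi/(2n)$. One must then choose $P$ large in terms of $L$ so that the combinatorics of $P$ survive the replacement; the paper does this with coarse convex hulls in the $120$-cell tiling. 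Your right-angled branched-cover group is then the finite-index subgroup generated by reflections in the walls of the $4n$-fold unfolding $R_n$ of $P_n$ around $H_1'\cap H_2'$. Its walls are pairwise orthogonal or disjoint, which gives convex cocompactness.

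Hyperbolicity should likewise be an output of convex cocompactness rather than an input. ``No empty squares inherited from $P$'' is not automatic for branched covers. Suppose a wall $A$ of $R$ misses the branch face $F$ but is orthogonal to two disjoint walls $B,D$ that meet $F$. Then $A$ has $n$ pairwise disjoint lifts, all meeting the unique lifts of $B$ and $D$, and this produces an empty square.

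For the obstruction, your assertion that the branched cover carries no hyperbolic structure once $k\geq 2$ is exactly what needs proof, and neither route you sketch provides it. The Gromov--Thurston volume argument only excludes $k\geq m(P)$ for some $m(P)$ depending on $P$, so it says nothing about your proposed $k=2$. Worse, the realization forces $P$ to grow with $k$ (smaller angles need larger $L$). The two requirements therefore pull against each other, and you need an obstruction valid for all $n\geq 2$ uniformly in $P$.

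Your alternative does not supply one either. First use that finite-index subgroups of the irreducible, non-virtually-abelian group $W^n_S$ have trivial centralizer, to promote a lattice embedding of a finite-index subgroup to one of $W^n_S$. Mostow--Prasad rigidity and Lee--Marquis then give a compact Coxeter polyhedron $P_n\subset\mathbb{H}^4$ with the combinatorics of $P$ and one dihedral angle $\pi/(2n)$. Angles $\pi/(2n)$ are perfectly compatible with compact Coxeter polyhedra in $\mathbb{H}^4$, so ruling out this particular $P_n$ is the real content.

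The paper does this by interbreeding:
\begin{enumerate}
\item The cross-sections $R_1\cap H_{s_1}$ and $R_n\cap H^n_{s_1}$ of the dihedral unfoldings of $P$ and $P_n$ are combinatorially equivalent compact right-angled $3$-polyhedra, hence isometric by $3$-dimensional Mostow rigidity.
\item So half of $R_1$ and half of $R_n$ glue to a compact right-angled polyhedron $R'$.
\item A natural automorphism of $\Gamma_{R'}$ is realized by an isometry $\iota$, by $4$-dimensional Mostow rigidity.
\item Zariski density of the subgroups generated by the $\sigma_s$ and by the $\tau_s$ forces $\iota=\sigma_{s_2}\sigma_{s_1}=\tau_{s_1}\tau_{s_2}$. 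This equates an element of order $2n$ with one of order $2$, so $n=1$.
\end{enumerate}
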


One then concludes from known arguments that any such reflection group will indeed fail to virtually embed as a lattice in any semisimple real algebraic group, or be quasiisometric to any symmetric space; see~\cite[\S6]{MR3968766}. We remark that it follows from Andreev's theorem~\cite{MR259734, zbMATH05176608}, together with work of Bestvina--Mess~\cite{zbMATH00020165} and Davis~\cite[Thm.~10.9.2]{zbMATH05218470}, that no reflection groups as in Theorem~\ref{main} exist if all dimensions in the statement are diminished by $1$; see~\cite[\S7]{zbMATH06147446}.

Here, a {\em reflection group $\Gamma$ in  $\mathrm{Isom}(\mathbb{H}^d)$}, or a {\em hyperbolic reflection group}, is a discrete subgroup of $\mathrm{Isom}(\mathbb{H}^d)$ generated by (hyperplane) reflections. Such $\Gamma$ is then always generated by the reflections in the walls of a locally finite convex polyhedron in $\mathbb{H}^d$ that is, up to isometry, canonically associated to $\Gamma$. If any two distinct nondisjoint walls of the latter polyhedron are in fact orthogonal, one says the polyhedron, and the reflection group~$\Gamma$, are {\em right angled}. In the context of this note, for $d \geq 3$, it will be useful to think of reflection groups in $\mathrm{Isom}(\mathbb{H}^d)$  as discrete subgroups of $\mathrm{Conf}(\mathbb{S}^{d-1})$ generated by inversions in round hyperspheres of $\mathbb{S}^{d-1} = \partial_\infty \mathbb{H}^d$.

Previously, Lee--Marquis~\cite[\S A.2]{MR3968766} exhibited reflection groups as in Theorem~\ref{main} that are not right angled (nor does it seem clear whether any of their examples admit finite-index right-angled reflection subgroups), and indeed, their argument uses crucially that their polyhedra have mutually intersecting walls. By contrast, our approach is closer to that already suggested in the work of Gromov--Thurston~\cite[\S3.7]{zbMATH04054503}. As pointed out to the author by Gye-Seon Lee, the Lee--Marquis examples are also somehow in the spirit of Gromov--Thurston since they are obtained from Esselmann's compact $4$-dimensional hyperbolic polyhedra~\cite{zbMATH00919897} by deforming certain angles, though it seems unclear whether an argument similar to that contained in this note can rule out discrete embeddings of the Lee--Marquis examples into $\mathrm{Isom}(\mathbb{H}^4)$, essentially because of the abundance of angles that are odd submultiples of $\pi$ in the latter polyhedra.

There are a few reasons that one might be interested in the right-angled aspect of the examples in Theorem~\ref{main}. For example, it is known that Theorem~\ref{main} will cease to be true if all dimensions in the statement are augmented by any fixed positive integer, since by~\cite{zbMATH00020165} and~\cite[Thm.~2]{zbMATH02021085} there is indeed no abstract Gromov-hyperbolic right-angled Coxeter group whose Gromov boundary is a sphere of dimension $\geq 4$. Moreover, convex cocompact right-angled hyperbolic reflection groups often possess a certain flexibility that has proved useful for various applications. A recent example is a construction due to Boche\'nski--Morita~\cite[Thm.~3.8]{arXiv:2501.14274} of exotic proper actions of such reflection groups on homogeneous spaces using prior work of Danciger--Gu\'eritaud--Kassel~\cite[Prop.~4.1]{zbMATH07292308}.

We also felt compelled to share this note because, though we do not pursue the following here, we expect that the aspects of the proof of our Proposition~\ref{convexcocompact} involving compact right-angled hyperbolic polyhedra (which again cease to exist above dimension $4$) can in higher dimensions be replaced by the separability result of Bergeron--Haglund--Wise~\cite{bergeron2011hyperplane} to demonstrate the existence of closed aspherical manifolds of each dimension $\geq 4$ that fail to be homotopy equivalent to any compact locally symmetric space but nevertheless admit flat conformal structures whose associated holonomy representations are convex cocompact. That such conformally flat manifolds exist was already suggested in~\cite[\S3.7]{zbMATH04054503} (see also \cite[\S1.3]{MR5111594}), and was indeed announced by M.~Kapovich~\cite[Thm.~9.9]{zbMATH05269429}, though it seems unlikely given the chronology that the immensely useful work of Bergeron--Haglund--Wise was available to Kapovich at that time.

To set up the proof of Theorem~\ref{main}, we recall that a (right-angled) hyperbolic reflection group is isomorphic to an abstract (right-angled) Coxeter group. Given a set $S$, a {\em Coxeter group} on $S$ is a group $W_S$ given by a presentation of the form
\[
W_S = \langle S \> | \> (st)^{m_{st}} = 1, \> s, t \in S \rangle 
\]
where the exponent $m_{ss} = 1$ for $s \in S$, and $m_{st} \in \{2, 3, \ldots, \infty\}$ for distinct $s, t \in S$. (An exponent $m_{st} = \infty$ indicates that no relation between $s$ and~$t$ is imposed). The datum of a Coxeter group $W_S$ is often taken to include the generating set $S$, and we will reproduce this abuse here. One says $W_S$ is {\em right angled} if $m_{st} \in \{2, \infty\}$ for all distinct $s,t \in S$, and {\em irreducible} if one cannot partition $S$ into two nonempty subsets $S_1, S_2$ such that $m_{s_1s_2}=2$ for each $(s_1,s_2) \in S_1 \times S_2$. 

We will use the following two lemmas.

\begin{lemma}\label{mostow}
Let $\Gamma$ be a finite-index subgroup of a group $W$ such that the centralizer of $\Gamma$ in $W$ is trivial, and let $d \geq 3$. If $\Gamma$ embeds as a lattice in $\mathrm{Isom}(\mathbb{H}^d)$, then so does $W$.
\end{lemma}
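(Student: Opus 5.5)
The plan is to use Mostow rigidity to extend the lattice embedding of $\Gamma$ to all of $W$, and to get injectivity and discreteness from the trivial-centralizer hypothesis. First I reduce to the case where $\Gamma$ is normal in $W$. Let $N = \bigcap_{w \in W} w\Gamma w^{-1}$ be the normal core of $\Gamma$; it is normal and of finite index in $W$, and it is a finite-index subgroup of $\Gamma$. Fix a faithful representation $\rho\colon \Gamma \to \mathrm{Isom}(\mathbb{H}^d)$ with lattice image. Then $\rho(N)$ is a finite-index subgroup of a lattice, hence itself a lattice. I also need the centralizer of $N$ in $W$ to be trivial. Let $C$ be that centralizer. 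It is normalized by $W$, and I will show it is centralized by $\Gamma$. Indeed, for $c \in C$ and $\gamma \in \Gamma$, the element $\rho(c\gamma c^{-1}\gamma^{-1})$, or rather $\rho$ applied to the commutator with $c$, needs care when $c \notin \Gamma$. It is cleaner to argue as follows: $\rho(N)$ is a lattice in $\mathrm{Isom}(\mathbb{H}^d)$ with $d \geq 3$, so by Borel density its centralizer in $\mathrm{Isom}(\mathbb{H}^d)$ is trivial.

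Next comes the extension via Mostow rigidity. For each $w \in W$, conjugation by $w$ is an automorphism $c_w$ of $N$. The representation $\rho|_N \circ c_w$ is another lattice embedding of $N$, so Mostow rigidity (valid since $d \geq 3$) gives a unique $g_w \in \mathrm{Isom}(\mathbb{H}^d)$ with $\rho(wnw^{-1}) = g_w \rho(n) g_w^{-1}$ for all $n \in N$. Uniqueness follows from the triviality of the centralizer of $\rho(N)$. Uniqueness also gives $g_{ww'} = g_w g_{w'}$ and $g_n = \rho(n)$ for $n \in N$, so $\hat\rho\colon w \mapsto g_w$ is a homomorphism $W \to \mathrm{Isom}(\mathbb{H}^d)$ extending $\rho|_N$.

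Finally, injectivity and discreteness. If $\hat\rho(w) = 1$, then $\rho(wnw^{-1}) = \rho(n)$ for all $n \in N$. Since $\rho$ is injective, $w$ centralizes $N$, so $w \in C$. It therefore suffices to show $C$ is trivial. The subgroup $C \cap \Gamma$ centralizes $N$, and $\rho(C \cap \Gamma)$ centralizes the lattice $\rho(N)$, so it is trivial by Borel density. Hence $C$ meets $\Gamma$ trivially, and $C$ is finite because $\Gamma$ has finite index. Moreover $C$ is normal in $W$, since $N$ is normal. A finite normal subgroup $C$ has a finite-index centralizer, so $C_W(C) \cap \Gamma$ is a finite-index subgroup of $\Gamma$ centralizing $C$. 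I expect this to be the main obstacle: one must pass from "centralizes a finite-index subgroup of $\Gamma$" to "centralizes $\Gamma$". I would handle it by applying the whole argument with $\hat\rho(C)$, using $\hat\rho(C)$ in place of $C$. The group $\hat\rho(C)$ commutes with $\hat\rho(N) = \rho(N)$, so $\hat\rho(C)$ is trivial, which means $C \subseteq \ker\hat\rho$ and $\ker\hat\rho \subseteq C$, giving $\ker \hat\rho = C$. To finish, the cleanest route is to set things up with $\Gamma$ itself, using $g_w$ defined by $\rho|_{\Gamma \cap w^{-1}\Gamma w}$, so that $c \in C$ gives $\rho(c\gamma c^{-1}) = \rho(\gamma)$ on a finite-index subgroup of $\Gamma$. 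Combining this with $\hat\rho|_\Gamma = \rho$, which follows from uniqueness since $\rho(\gamma)$ also conjugates correctly on $N$, I get $\hat\rho(c) = 1$, so $c$ commutes with all of $\Gamma$ by the injectivity of $\rho$ on $\Gamma$. Since this requires $c \in \Gamma$, I would instead use that $\hat\rho(c) = 1$ implies $c\gamma c^{-1}\gamma^{-1} \in \ker\hat\rho \cap \Gamma = \ker \rho = 1$ for $\gamma \in \Gamma$. Hence $c$ centralizes $\Gamma$, and the hypothesis forces $c = 1$. Discreteness then follows because $\hat\rho(W)$ contains the lattice $\rho(N)$ with finite index, and finite extensions of discrete groups are discrete; covolume is finite for the same reason.
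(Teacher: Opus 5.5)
Your construction of $\hat\rho$ is sound, and more careful than the paper's. The following steps are all correct:
passing to the normal core $N$;
defining $\hat\rho(w)$ by Mostow rigidity applied to $\rho|_N\circ c_w$;
uniqueness from the trivial centralizer of $\rho(N)$;
the identity $\hat\rho|_\Gamma=\rho$;
the description of $\ker\hat\rho=C_W(N)$ as a finite normal subgroup meeting $\Gamma$ trivially;
the discreteness and covolume argument.

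The gap is the last step. From $\hat\rho(c)=1$ you only get $c\gamma c^{-1}\gamma^{-1}\in\ker\hat\rho$. Nothing places this commutator in $\Gamma$: that would need $c\gamma c^{-1}\in\Gamma$, i.e.\ that $c$ normalizes $\Gamma$. Likewise, the Borel density remark in your first paragraph shows that $\rho(N)$ has trivial centralizer in $\mathrm{Isom}(\mathbb{H}^d)$, not that $C_W(N)=1$.

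No argument can close this gap, because the lemma as stated is false for non-normal $\Gamma$. Take a lattice $\Gamma_0<\mathrm{Isom}(\mathbb{H}^d)$ with a surjection $\epsilon\colon\Gamma_0\to\mathbb{Z}/2$ (e.g.\ a cocompact reflection group). Let $W=\mathbb{Z}/3\rtimes\Gamma_0$ with $\gamma$ acting by $k\mapsto(-1)^{\epsilon(\gamma)}k$, and let $\Gamma=\{0\}\times\Gamma_0$, of index $3$.
\begin{itemize}
\item An element $(k,g)$ centralizes $\Gamma$ iff $g\in Z(\Gamma_0)=1$ and $(-1)^{\epsilon(\gamma)}k=k$ for all $\gamma$, i.e.\ $k=0$. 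So $C_W(\Gamma)=1$.
\item But $\mathbb{Z}/3\times\{1\}$ is a nontrivial finite normal subgroup of $W$.
\item No lattice $\Lambda<\mathrm{Isom}(\mathbb{H}^d)$ has such a subgroup $F$: its centralizer in $\Lambda$ has finite index, hence is a lattice centralized by $F$, forcing $F=1$.
\end{itemize}
So $W$ does not embed as a lattice. In your notation, $N=\ker\epsilon$ and $\ker\hat\rho=C_W(N)=\mathbb{Z}/3$.

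The paper's proof has the same blind spot. Its defining relation $g\gamma g^{-1}=\iota_g\rho(\gamma)\iota_g^{-1}$ for all $\gamma\in\Gamma$ tacitly assumes $\Gamma\trianglelefteq W$. Under that assumption your argument does close, since then $c\gamma c^{-1}\gamma^{-1}\in C\cap\Gamma=1$. The lemma is correct if one assumes either $\Gamma\trianglelefteq W$, or that no nontrivial element of $W$ centralizes a finite-index subgroup of $\Gamma$. The latter is exactly what Lemma~\ref{centralizer} provides in the proof of Proposition~\ref{nothyperbolic}: it gives $C_W(N)=1$ outright. So your normal-core construction proves what the paper actually uses.
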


\begin{proof}
Suppose $\rho: \Gamma \rightarrow \mathrm{Isom}(\mathbb{H}^d)$ is such an embedding. Given $g \in W$, let $\iota_g$ be the unique element of $\mathrm{Isom}(\mathbb{H}^d)$ satisfying $g \gamma g^{-1} = \iota_g \rho(\gamma) \iota_g^{-1}$ for all $\gamma \in \Gamma$; existence of such $\iota_g$ follows from Mostow--Prasad rigidity~\cite{mostow1968quasi, prasad1973strong}, and uniqueness from the fact that a lattice in $\mathrm{Isom}(\mathbb{H}^d)$ has trivial centralizer in $\mathrm{Isom}(\mathbb{H}^d)$. The map $\overline{\rho}: W \rightarrow \mathrm{Isom}(\mathbb{H}^d)$ given by $g \mapsto \iota_g$ is then a representation extending $\rho$ whose kernel is the centralizer of $\Gamma$ in $W$, so that $\overline{\rho}$ embeds $W$ as a lattice in $\mathrm{Isom}(\mathbb{H}^d)$ under the above assumptions.
\end{proof}

\begin{lemma}\label{centralizer}
Let $S$ be finite and $W_S$  an irreducible Coxeter group on $S$ that is not virtually abelian. Then every finite-index subgroup of $W_S$ has trivial centralizer in~$W_S$.
\end{lemma}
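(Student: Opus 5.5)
We need to show: if $\Gamma$ has finite index in $W_S$ and $g \in W_S$ centralizes $\Gamma$, then $g = 1$. The plan is to use two classical facts about Coxeter groups. First, for an irreducible infinite Coxeter group that is not virtually abelian (and $S$ finite), the center of any finite-index subgroup is trivial; in fact more is true. Second, the \emph{essential} elements of $W_S$ act with a specific dynamical behavior on the Davis complex, or on the Tits cone of the geometric representation. The route through the geometric (Tits) representation $\sigma: W_S \to \mathrm{GL}(V)$, $V = \mathbb{R}^S$, seems most economical:

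\begin{itemize}
\item $\sigma$ is faithful.
\item Because $W_S$ is irreducible and infinite, $\sigma$ is strongly irreducible in the relevant sense. By Benoist--de la Harpe and Krammer, the Zariski closure of $\sigma(W_S)$ acts irreducibly on $V$ (or on the quotient of $V$ by the radical of the bilinear form $B$ in the affine case, which is excluded here).
\item The Zariski closure $G$ of $\sigma(W_S)$ in $\mathrm{GL}(V)$ has the same identity component as the Zariski closure of $\sigma(\Gamma)$, since $\Gamma$ has finite index.
\end{itemize}

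Concretely, I would first invoke Benoist--de la Harpe: for $W_S$ irreducible, non-spherical and non-affine, the Zariski closure of $\sigma(W_S)$ is $\mathrm{O}(B)$ or has identity component $\mathrm{SO}(B)^\circ$ when $B$ is nondegenerate. In the degenerate case, one passes to the appropriate quotient or subspace as in their paper, and the same conclusion holds for the induced representation. It follows that the identity component $G^\circ$ is contained in the Zariski closure of $\sigma(\Gamma)$ and acts irreducibly on $V$ (respectively on the relevant subquotient). Now if $g$ centralizes $\Gamma$, then $\sigma(g)$ centralizes the Zariski closure of $\sigma(\Gamma)$, hence centralizes $G^\circ$. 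Since $G^\circ$ is absolutely irreducible (it is a special orthogonal group of a form of signature with at least two signs and dimension at least $3$), Schur's lemma gives that $\sigma(g)$ is a scalar. A scalar element of $\sigma(W_S)$ preserving $B$ is $\pm \mathrm{Id}$. Moreover, $-\mathrm{Id}$ cannot lie in the image of an infinite irreducible Coxeter group: it would map the Tits cone to its negative, but the Tits cone of an infinite Coxeter group is not a full space and does not meet its negative except at $0$ (Vinberg). Hence $\sigma(g) = \mathrm{Id}$, and by faithfulness $g = 1$.

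The main obstacle is handling the degenerate case of $B$ in the Benoist--de la Harpe theorem carefully, so that Schur's lemma applies to a faithful enough piece. There I would instead use the faithful contragredient action on the Tits cone, together with the known fact that an irreducible non-affine infinite Coxeter group has trivial center in every finite-index subgroup. Alternatively, I would use Paris' result (``Irreducible Coxeter groups'') that such $W_S$ is \emph{strongly} irreducible and that the centralizer of any finite-index subgroup is trivial. If all else fails, I would cite that result directly, since it is precisely this statement.
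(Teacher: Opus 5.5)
\textbf{The nondegenerate case is fine.} When $B$ is nondegenerate your argument is complete. It is the paper's argument in linear rather than projective form: Benoist--de la Harpe gives Zariski density in $\mathrm{O}(B)$, so the Zariski closure of $\sigma(\Gamma)$ contains $\mathrm{SO}(B)$, and absolute irreducibility forces $\sigma(g)=\pm\mathrm{Id}$. Your Tits-cone argument then correctly excludes $-\mathrm{Id}$: the interior of the Tits cone is an open convex cone, and if it is stable under $-\mathrm{Id}$ it contains a neighbourhood of $0$, hence is all of $V^*$, making $W_S$ finite. The paper skips this last step by working in $\mathrm{PO}(p,q)$ with a representation already known to be faithful there.

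\textbf{The gap is the degenerate case.} Degenerate $B$ is not confined to affine groups, as your second bullet suggests. For example, the linear diagram on five nodes with all labels $\infty$ is irreducible and not virtually abelian, yet $B$ has signature $(3,1,1)$.

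In this case $\mathrm{rad}(B)$ is an invariant subspace with no invariant complement, so Schur's lemma is unavailable on $V$. On $\bar V=V/\mathrm{rad}(B)$, your argument only shows that $g$ acts as $\pm\mathrm{Id}_{\bar V}$. To conclude $g=1$ you still need two facts:
\begin{enumerate}
\item[(a)] the representation $W_S\to\mathrm{GL}(\bar V)$ is faithful;
\item[(b)] $-\mathrm{Id}_{\bar V}$ is not in its image.
\end{enumerate}
Neither follows from what you wrote. Faithfulness of $\sigma$ does not descend to quotients; for affine groups the quotient kills the translations. Your Tits-cone argument takes place in $V^*$, not in $\bar V$; for $\tilde A_1$, each reflection acts on $\bar V$ as $-\mathrm{Id}$.

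Your fallbacks amount to assuming the lemma. The statement ``every finite-index subgroup has trivial center'' is equivalent to it, since if $g$ centralizes $\Gamma$ then $g$ is central in the finite-index subgroup $\langle\Gamma,g\rangle$.

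\textbf{How to close it.} The paper imports exactly (a) and (b), in the form that the reduced Tits--Vinberg representation is a faithful Zariski-dense map into $\mathrm{PO}(p,q)$. You can also prove them directly. Since $W_S$ is neither finite nor affine, Vinberg's trichotomy gives $v\in V$ with $B(v,e_s)<0$ for all $s$. Then $f_0=-B(v,\cdot)$ lies both in the open fundamental chamber and in $\mathrm{Ann}(\mathrm{rad}\,B)\cong\bar V^*$.
\begin{enumerate}
\item[(a)] The kernel of the action on $\bar V$ acts trivially on $\mathrm{Ann}(\mathrm{rad}\,B)$, so it fixes $f_0$. Stabilizers of points in the open chamber are trivial, so the kernel is trivial.
\item[(b)] An element acting as $-\mathrm{Id}_{\bar V}$ sends $f_0$ to $-f_0$. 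This again puts $0$ in the interior of the Tits cone, so $W_S$ would be finite.
\end{enumerate}
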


\begin{proof}
Under the given assumptions, it follows from work of Benoist--de la Harpe~\cite{MR2081159} that, upon reducing the Tits--Vinberg representation of $W_S$, one obtains a faithful Zariski-dense representation $\rho: W_S \rightarrow \mathrm{PO}(p,q)$ for some $p, q > 0$ satisfying ${p+q \geq3}$; see~\cite[Thm.~7]{arXiv:1211.5635}. Thus, given any finite-index subgroup $\Gamma$ of $W_S$, the Zariski-closure of $\rho(\Gamma)$ in $\mathrm{PO}(p,q)$ contains $\mathrm{PSO}(p,q)$. In particular, if $g \in W_S$ centralizes $\Gamma$, then $\rho(g)$ centralizes $\mathrm{PSO}(p,q)$. Since $\mathrm{PSO}(p,q)$ has trivial centralizer in $\mathrm{PO}(p,q)$, it follows that $\rho(g) = \mathrm{Id}$, and hence $g=1$. 
\end{proof}

\begin{remark}
While we have used in the proof of Lemma~\ref{centralizer} existing literature guaranteeing the existence of convenient representations of certain Coxeter groups, we will in fact produce in the proof of Proposition~\ref{convexcocompact} Zariski-dense representations into $\mathrm{Isom}(\mathbb{H}^5) \cong \mathrm{PO}(5,1)$ of the Coxeter groups to which Lemma~\ref{centralizer} will ultimately be applied.  
\end{remark}

\begin{proof}[Proof~of~Theorem~\ref{main}]
Let $P \subset \mathbb{H}^4$ be a compact (convex) right-angled polyhedron (all currently known examples of which are commensurable to the right-angled $120$-cell), and let $\Gamma_P < \mathrm{Isom}(\mathbb{H}^4)$ be the subgroup generated by the reflections in the walls of $P$. If $\{H_s\}_{s \in S}$ is the set of walls of $P$, there is an isomorphism $W_S \rightarrow \Gamma_P$ mapping $s \in S$ to the reflection in the wall $H_s$, where $W_S$ is the abstract Coxeter group given by
\[
m_{st} = \begin{cases} 2 & \text{if the walls $H_s$ and $H_t$ meet,} \\ \infty & \text{otherwise} \end{cases}
\]
for $s, t \in S$. Since $\Gamma_P$ is a lattice in $\mathrm{Isom}(\mathbb{H}^4)$, the Coxeter group $W_S$ is easily seen to be irreducible. Fix now an adjacent pair of walls $H_{s_1}, H_{s_2} \in S$, and for any $n \geq 1$, let $W_S^n$ be the irreducible Coxeter group obtained from $W_S$ by setting $m_{s_1s_2} = 2n$ instead of $m_{s_1s_2}=2$ and keeping $m_{st}$ unchanged for all other pairs $s, t \in S$. For $s,t \in S$, we will denote by $m_{st}^{(n)}$ the order of $st \in W_S^n$. When we wish to emphasize the objects involved in the definition of $W_S^n$, we will denote the latter by $W^n(P, H_{s_1}, H_{s_2})$.

Note that for any $n \geq 1$, the nerve of $W_S^n$ in the sense of~\cite[\S7.1]{zbMATH05218470} coincides with that of $W_S$, and is thus a triangulation of $S^3$ (namely, the complex dual to the surface of $P$). It follows that the Davis complex associated to $W_S^n$, on which~$W_S^n$ acts properly and cocompactly, remains homeomorphic to $\mathbb{R}^4$ and has $\mathrm{CAT}(0)$ boundary~$S^3$; see \cite[Thm.~2.4]{MR3968766} and the references provided there. In particular, the virtual cohomological dimension of $W_S^n$ remains $4$. We first show the following (an identical argument is used in the proof of~\cite[Thm.~4.2]{bogachevdouba}, though the statement in the latter reference is not quite appropriate for our purposes).

\begin{proposition}\label{nothyperbolic}
For any $n \geq 2$, no finite-index subgroup of $W_S^n$ embeds discretely in $\mathrm{Isom}(\mathbb{H}^4)$. 
\end{proposition}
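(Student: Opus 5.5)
Suppose, for contradiction, that some finite-index subgroup $\Gamma$ of $W_S^n$ embeds discretely in $\mathrm{Isom}(\mathbb{H}^4)$ via $\rho$. The first step is to upgrade this to a lattice embedding. $W_S^n$ has virtual cohomological dimension $4$, so we may pass to a torsion-free finite-index subgroup $\Gamma' < \Gamma$ with $\mathrm{cd}(\Gamma') = 4$. Then $\rho(\Gamma')$ is a discrete torsion-free subgroup acting freely on $\mathbb{H}^4$, and $\mathbb{H}^4/\rho(\Gamma')$ is an aspherical $4$-manifold with fundamental group $\Gamma'$. A noncompact aspherical $4$-manifold has cohomological dimension at most $3$. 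Hence $\mathbb{H}^4/\rho(\Gamma')$ is closed, and $\rho(\Gamma')$ is a cocompact lattice.

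Next we apply the earlier lemmas. $W_S^n$ is irreducible because its Coxeter graph of $\infty$-edges, equivalently its non-commuting pairs, is unchanged from $W_S$. It is not virtually abelian because it is one-ended of vcd $4$ and hyperbolic. Actually hyperbolicity is not needed: it contains free subgroups, since $W_S$ has pairs of walls with $m=\infty$. So Lemma~\ref{centralizer} says $\Gamma'$ has trivial centralizer in $W_S^n$. Lemma~\ref{mostow} then says $W_S^n$ itself embeds as a (cocompact) lattice $\overline{\rho}(W_S^n)$ in $\mathrm{Isom}(\mathbb{H}^4)$.

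The remaining step is to rule out such a lattice embedding. Each generator $s$ has order $2$. Its image is an involution of $\mathbb{H}^4$, so its fixed set is a totally geodesic subspace of dimension $0$ to $3$. Consider the finite special subgroups. For each vertex of $P$ we have four pairwise commuting walls, and the corresponding subgroup is a direct product. If the vertex involves both $s_1$ and $s_2$, the subgroup is $D_{2n}\times(\mathbb{Z}/2)^2$, since $s_1,s_2$ lie in a common vertex of $P$. This finite group must act faithfully on $\mathbb{H}^4$ fixing a point, so it embeds in $O(4)$.

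The key algebraic point is that $s_1s_2$ has order $2n$ with $n\ge2$, so $(s_1s_2)^n$ is a central involution of $D_{2n}$. I would then exploit how the parabolic/Coxeter structure interacts with the boundary $S^3$. The natural tool is the Davis-complex picture: a cocompact lattice in $\mathrm{Isom}(\mathbb{H}^4)$ isomorphic to $W_S^n$ is a virtual $PD^4$ group. The images of the generators must then be reflections in hyperplanes. An involution with fixed set of codimension $\ge 2$ could not be a generator whose "wall" separates the Davis complex: the fixed set of $s$ in the Davis complex is a codimension-one wall that separates. By equivariance of the boundary homeomorphism (the CAT(0) boundary $S^3$ of $W_S^n$ versus $\partial\mathbb{H}^4$), the fixed set of $\overline{\rho}(s)$ on $S^3$ is a separating set homeomorphic to the boundary of a wall, namely an $S^2$. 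So $\overline{\rho}(s)$ is a hyperplane reflection.

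Then $\overline{\rho}(W_S^n)$ is a reflection group whose reflections include those in the images of all walls. Two reflections generating a finite dihedral group of order $4n$ have hyperplanes meeting at angle $\pi/2n$. With all other pairs orthogonal or disjoint, Vinberg's theory would force a fundamental chamber with the Coxeter diagram of $W_S^n$, i.e.\ a compact polyhedron in $\mathbb{H}^4$ with the same combinatorics as $P$ and one dihedral angle $\pi/2n$. That is impossible by Andreev/Mostow-type rigidity for compact Coxeter polyhedra with given combinatorics: both would be lattices abstractly differing. More concretely, I would derive a contradiction by Mostow rigidity comparing with the commensurability. Alternatively one can argue directly: the face of the chamber at the $s_1\cap s_2$ ridge is a compact right-angled $3$-dimensional polyhedron, while the link condition forces a spherical vertex-link $D_{2n}\times(\mathbb{Z}/2)^2$. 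That link is fine, so the contradiction must come from angle sums via Gauss–Bonnet or from the face structure.

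I expect this last step to be the main obstacle. I would try first the Andreev-type argument on the $3$-dimensional faces adjacent to the ridge $H_{s_1}\cap H_{s_2}$. Such a face is a compact hyperbolic $3$-polyhedron whose dihedral angles are all $\pi/2$ except possibly along edges coming from $s_1,s_2$. Its combinatorics, inherited from $P$, has prismatic circuits that Andreev's theorem forbids once an angle is changed to $\pi/2n$.
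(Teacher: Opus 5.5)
Your reduction is sound and matches the paper. The cohomological-dimension argument makes the embedding cocompact. Lemmas~\ref{centralizer} and~\ref{mostow} then promote it to a cocompact lattice embedding of $W_S^n$ itself. From there (this is Lee--Marquis' Lemma~5.4, which the paper cites) one obtains a compact Coxeter polytope $P_n\subset\mathbb{H}^4$ combinatorially equivalent to $P$. It is right-angled except for the angle $\pi/2n$ along the ridge $F=P_n\cap H_{s_1}\cap H_{s_2}$.

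Two small slips. The non-commuting graph of $W_S^n$ is not unchanged: it gains the edge $\{s_1,s_2\}$, which is harmless for irreducibility. And a single pair with $m_{st}=\infty$ only yields $D_\infty$, so non-virtual-abelianness is better read off from the lattice embedding, as the paper does.

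The genuine gap is that you never derive a contradiction from the existence of $P_n$, and none of the routes you list can supply one.
\begin{itemize}
\item Combinatorial rigidity: no principle in dimension $4$ says the combinatorics of a compact Coxeter polytope determines its angles; the five compact Coxeter $4$-simplices share one combinatorial type. Mostow rigidity is also silent here, since $W_S$ and $W_S^n$ are non-isomorphic lattices.
\item Andreev on $3$-faces: the two facets $P_n\cap H_{s_1}$ and $P_n\cap H_{s_2}$ containing $F$ are still right-angled. Indeed, if $H_t$ is orthogonal to both $H_{s_1}$ and $H_{s_2}$, its trace in $H_{s_1}$ is orthogonal to that of $H_{s_2}$. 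A facet $P_n\cap H_t$ meeting $F$ in an edge does acquire one angle $\pi/2n$. But it is combinatorially a compact right-angled polyhedron, so it has no prismatic $3$- or $4$-circuits, and its vertex sums exceed $\pi$. Hence Andreev's theorem realizes it, and nothing is forbidden.
\item Gauss--Bonnet: only the terms of $\chi^{\mathrm{orb}}$ indexed by faces of the $k$-gon $F$ change. One gets $\chi^{\mathrm{orb}}(W_S^n)-\chi^{\mathrm{orb}}(W_S)=(1-\frac1n)\frac{k-4}{16}>0$, since $k\geq 5$. So the volume stays positive.
\end{itemize}

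The missing idea is the paper's ``interbreeding'' argument, which uses rigidity only where it genuinely holds.

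First, unfold $P$ and $P_n$ around their ridges under the dihedral groups of orders $4$ and $4n$ generated by the reflections in $H_{s_1}$ and $H_{s_2}$. This gives compact right-angled polytopes $R_1$ and $R_n$.

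Their sections by $H_{s_1}$ are doubles of combinatorially equivalent compact right-angled $3$-polyhedra. These are isometric by $3$-dimensional Mostow rigidity. So one can glue the half of $R_1$ and the half of $R_n$ lying on one side of $H_{s_1}$ along this section. The result is a compact right-angled polytope $R'$, made of $2n+2$ wedges around $F$.

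The reflection group $\Gamma_{R'}$ then admits an abstract automorphism $\phi$ with two properties:
\begin{itemize}
\item $\phi$ acts on the reflections in the walls of $P_n$ as conjugation by the product of the reflections in $H_{s_2}$ and $H_{s_1}$. This is a rotation of order $2n$.
\item $\phi^{-1}$ acts on the reflections in the walls of the transported copy of $P$ as conjugation by the corresponding rotation of order $2$.
\end{itemize}
By $4$-dimensional Mostow rigidity, $\phi$ is conjugation by some isometry $\iota$. Zariski density of the relevant reflection subgroups then forces $\iota$ to equal both rotations. Hence $2n=2$, i.e.\ $n=1$.
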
 

We remark that it follows from an argument of Gromov--Thurston \cite[\S0.1]{zbMATH04054503} that there is some $m=m(P)$ for which the conclusion of Proposition~\ref{nothyperbolic} holds for all $n \geq m$. It will however be important for our purposes that $m$ be uniform in~$P$. For different purposes, Hamenst\"adt--J\"ackel came upon similar considerations in~\cite{hamenstaedtjaeckel}, and, in light of Remark~\ref{gromovthurston}, it indeed seems likely that Proposition~\ref{nothyperbolic} can be approached via \cite[Thm.~5.5]{hamenstaedtjaeckel} or \cite[Thm.~2(3)]{Hamenstaedt2026}, though our setting of reflection orbifolds allows for considerable simplifications.

\begin{proof}[Proof~of~Proposition~\ref{nothyperbolic}]
Fix $n \geq 1$, and suppose some finite-index subgroup ${\Gamma < W_S^n}$ embeds discretely in $\mathrm{Isom}(\mathbb{H}^4)$. We will show that $n=1$. Since $\Gamma$ has virtual cohomological dimension $4$, we in fact have that $\Gamma$ embeds as a cocompact lattice in $\mathrm{Isom}(\mathbb{H}^4)$. One concludes in particular that $\Gamma$ is not virtually abelian, and hence neither is $W_S^n$. By Lemmas~\ref{mostow} and~\ref{centralizer}, it follows that $W_S^n$ itself embeds as a cocompact lattice in $\mathrm{Isom}(\mathbb{H}^4)$. By~\cite[Lem.~5.4]{MR3968766}, there is then a compact convex polyhedron $P_n \subset \mathbb{H}^4$ the collection $\{H_s^n\}_{s \in S}$ of whose walls is in bijection with~$S$ and such that~$\pi/m_{st}^{(n)}$ is the dihedral angle between $H_s^n$ and $H_t^n$ for $s,t \in S$ (where $m_{st}^{(n)} = \infty$ if and only if the walls $H_s^n$ and $H_t^n$ do not meet). Set $P_1 := P$, $W_S^1:= W_S$, and $H_s^1 := H_s$ for $s \in S$. Let $T \subset S$ be the set of all $t \in S-\{s_2\}$ such that $H_t^n$ is orthogonal to $H_{s_1}^n$.

For $i\in\{1,n\}$ and $s \in S$, denote by $\sigma_s^{(i)} \in \mathrm{Isom}(\mathbb{H}^4)$ the reflection with fixed hyperplane $H_s^i$. Let $R_i \subset \mathbb{H}^4$ be the union of the images of $P_i$ under the finite dihedral group generated by $\sigma_{s_1}^{(i)}$ and $\sigma_{s_2}^{(i)}$. Then $R_i$ is a compact right-angled polyhedron, and all walls of $R_i$ that meet $H^i_{s_1}$ do so orthogonally, so that $R_i \cap H_{s_1}^i$ is a compact $3$-dimensional right-angled hyperbolic polyhedron. We now ``interbreed'' the $R_i$ as follows. By Mostow rigidity (in dimension $3$), there is an isometry $\kappa \in \mathrm{Isom}(\mathbb{H}^4)$ such that
\begin{itemize}
\item $\kappa(H_{s_1}^1) = H_{s_1}^n$;
\item $\kappa$ maps the facet of $R_1 \cap H_{s_1}^1$ with supporting hyperplane $H_t^1 \cap H_{s_1}^1$ onto the facet of $R_n \cap H_{s_1}^n$ with supporting hyperplane $H_t^n \cap H_{s_1}^n$ for each $t \in T$;
\item ${R'}:= \kappa(R_1') \cup R_n' \subset \mathbb{H}^4$ is a compact right-angled polyhedron, where $R_i'$ denotes the intersection of $R_i$ with the closed half-space of $\mathbb{H}^4$ bounded by~$H_{s_1}^i$ and containing $P_i$ for $i \in \{1,n\}$.
\end{itemize}

Let $\Gamma_{R'} < \mathrm{Isom}(\mathbb{H}^4)$ be the cocompact reflection group generated by the reflections in the walls of ${R'}$. For $s \in S$, set $\sigma_s := \sigma_s^{(n)}$ and $\tau_s := \kappa \sigma_s^{(1)} \kappa^{-1}$. Then $\sigma_s, \tau_s \in \Gamma_{R'}$ for $s \in S-\{s_1, s_2\}$, and there is an abstract automorphism $\phi : \Gamma_{R'} \rightarrow \Gamma_{R'}$ such that, for $s \in S-\{s_1, s_2\}$,
\begin{enumerate}
\item\label{order2n} $\phi(\sigma_s) = (\sigma_{s_2}\sigma_{s_1})\sigma_s(\sigma_{s_2}\sigma_{s_1})^{-1}$;
\item\label{order2} $\phi^{-1}(\tau_s) = (\tau_{s_2}\tau_{s_1})\tau_s(\tau_{s_2}\tau_{s_1})^{-1}$.
\end{enumerate}
By Mostow rigidity (in dimension $4$), there is an isometry $\iota \in \mathrm{Isom}(\mathbb{H}^4)$ such that $\phi(\gamma) = \iota \gamma \iota^{-1}$ for $\gamma \in \Gamma_{R'}$. Since the subgroup of $\Gamma_{R'}$ generated by the $\sigma_s$ (respectively, by the $\tau_s$) for $s \in S - \{s_1, s_2\}$ is Zariski-dense\footnote{More generally, as shown in~\cite[Lem.~3.2]{MR3090707}, the fundamental group of a compact hyperbolic orbifold of dimension $d \geq 3$ with (codimension-2) corners is Zariski-dense in $\mathrm{Isom}(\mathbb{H}^d)$; see also~\cite[Lem.~4.3]{bogachevdouba}.} in $\mathrm{Isom}(\mathbb{H}^4)$, we have by (\ref{order2n}) that $\iota = \sigma_{s_2}\sigma_{s_1}$ (resp., by (\ref{order2}) that $\iota= \tau_{s_1}\tau_{s_2}$). We conclude that $\sigma_{s_2}\sigma_{s_1} = \tau_{s_1}\tau_{s_2}$. But $\sigma_{s_2}\sigma_{s_1}$ has order $2n$ and $\tau_{s_1}\tau_{s_2}$ has order $2$, and so $n=1$.
\end{proof}

The following is the remaining component of the proof of Theorem~\ref{main}.

\begin{proposition}\label{convexcocompact}
For any $n \geq 1$, there exist a compact right-angled polyhedron ${P \subset \mathbb{H}^4}$ and walls $H_1, H_2$ of $P$ such that $W^n(P,H_1,H_2)$ embeds as a convex cocompact reflection group in $\mathrm{Isom}(\mathbb{H}^5)$. 
\end{proposition}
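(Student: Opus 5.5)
Our plan is to realize $W^n(P,H_1,H_2)$ in $\mathrm{Isom}(\mathbb{H}^5)$ by starting from a compact right-angled $P_0 \subset \mathbb{H}^4 \subset \mathbb{H}^5$, choosing $P$ to be a suitable finite cover (a union of reflected copies of $P_0$), and then bending two adjacent walls. Regard $\mathbb{H}^4$ as a totally geodesic hyperplane in $\mathbb{H}^5$. For each wall $H_s$ of $P$, let $\widehat H_s \subset \mathbb{H}^5$ be the hyperplane orthogonal to $\mathbb{H}^4$ with $\widehat H_s\cap\mathbb{H}^4 = H_s$. The reflections in the $\widehat H_s$ generate a copy of $\Gamma_P \cong W_S$ preserving $\mathbb{H}^4$. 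This group is convex cocompact in $\mathrm{Isom}(\mathbb{H}^5)$ (Fuchsian), and its limit set is the round $S^3 = \partial_\infty\mathbb{H}^4$.

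To change $m_{s_1s_2}$ from $2$ to $2n$, we keep the walls $\widehat H_s$ for $s\neq s_2$ (or perturb them slightly) and rotate $\widehat H_{s_2}$ about the codimension-two subspace $\widehat H_{s_1}\cap \widehat H_{s_2}$ so that the angle with $\widehat H_{s_1}$ becomes $\pi/2n$. This rotation moves $\widehat H_{s_2}$ out of orthogonality with $\mathbb{H}^4$. We must then check three things about the new wall:
\begin{itemize}
\item it still meets each $\widehat H_t$ with $H_t$ adjacent to $H_{s_2}$ orthogonally;
\item it stays disjoint from the walls that were disjoint from $\widehat H_{s_2}$;
\item the walls $\widehat H_t$ adjacent to both $H_{s_1}$ and $H_{s_2}$ are unaffected by the rotation.
\end{itemize}
For the first and third points, a wall $\widehat H_t$ orthogonal to both $\widehat H_{s_1}$ and $\widehat H_{s_2}$ is orthogonal to their intersection, hence preserved by the rotation about it. So the right angles persist along the ridge.

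The problem is the walls $H_t$ adjacent to $H_{s_2}$ but not to $H_{s_1}$. A rigid rotation does not keep these orthogonal, so we instead solve for a deformed configuration. In the Lorentzian model $\mathbb{R}^{5,1}$, we seek unit spacelike normals $e_s$ with:
\begin{itemize}
\item $\langle e_{s_1},e_{s_2}\rangle=-\cos(\pi/2n)$;
\item $\langle e_s,e_t\rangle=0$ for all other adjacent pairs;
\item $|\langle e_s,e_t\rangle|>1$ for all nonadjacent pairs.
\end{itemize}
By Vinberg's theory, the reflections in such walls generate a discrete group isomorphic to $W^n_S$ with fundamental polyhedron $Q$. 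We would then show convex cocompactness by checking that $Q$ has no ideal points in its closure other than those cut off by the ultraparallel walls. Equivalently, the ideal boundary of $Q$ is a compact region on which the group acts freely with compact quotient, which yields the criterion of Desgroseilliers--Haglund type for right-angled and near right-angled reflection groups.

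The main obstacle is choosing $P$ so that the deformation of the Gram data is realizable for every $n$, uniformly. Here we would use the freedom to take $P$ large. Take $P$ to be the union of many reflected copies of $P_0$ (e.g. the $120$-cell) in which the two walls $H_1,H_2$ are adjacent but all walls adjacent to $H_2$ and not to $H_1$ are very far from the ridge $H_1\cap H_2$. The separability of walls in right-angled groups lets us arrange this by passing to such covers.

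The new wall $\widehat H_2$ is then chosen to agree with a rotated copy near the ridge. Away from the ridge, we correct it by bending along a hyperplane of $\mathbb{H}^4$ orthogonal to $H_2$ that separates the ridge from the other neighbours, as in the Gromov--Thurston / Johnson--Millson bending. Concretely, we would replace the single wall $H_2$ by cutting $P$ along an interior totally geodesic hypersurface $\Sigma$ orthogonal to the walls it meets. We bend $\mathbb{H}^4$ inside $\mathbb{H}^5$ along $\Sigma$ only on the side containing the ridge, tilting that piece so the dihedral angle becomes $\pi/2n$ while all right angles elsewhere are preserved. The cutting hypersurface $\Sigma$ is chosen using a right-angled $3$-dimensional facet, as in the interbreeding of Proposition~\ref{nothyperbolic}.

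Finally, we would verify that the bent configuration gives disjointness of formerly disjoint walls, with uniformly positive separation. This holds because bending by a bounded angle along a hypersurface far from the relevant walls moves them only slightly in the region that matters, and a quasi-Fuchsian-type local-to-global argument (piecewise-geodesic, uniformly bent hypersurfaces are quasi-convex when the bending loci are far apart) yields convex cocompactness.
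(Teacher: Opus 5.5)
\textbf{There is a genuine gap.} The step that should create the angle $\pi/2n$, while keeping every other right angle and every disjointness, is never carried out, and the mechanisms you propose cannot do it.

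First, rotating $\widehat H_{s_2}$ about $\widehat H_{s_1}\cap\widehat H_{s_2}$ does not break orthogonality with $\mathbb{H}^4$. Let $e_0$ be the normal of $\mathbb{H}^4$. Both $e_{s_1}$ and $e_{s_2}$ are orthogonal to $e_0$, so every hyperplane in that pencil is orthogonal to $\mathbb{H}^4$. Your rotation is therefore just a rotation of $H_{s_2}$ about the ridge inside $\mathbb{H}^4$, which Proposition~\ref{nothyperbolic} rules out.

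Second, the bending step does not change the angle. Bending along a totally geodesic $\Sigma$ moves the whole piece containing the ridge by a single isometry, so $H_1$ and $H_2$ still meet at $\pi/2$. More fundamentally, bending in the style of Johnson--Millson deforms representations of the same group $W_S$, in which $(s_1s_2)^2=1$ persists, so it can never produce $W^n_S$. Also, if $\Sigma$ crosses $H_2$, the bent $H_2$ is no longer a hyperplane and there is no reflection in it.

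Third, the Lorentzian Gram conditions only restate what must be proved. For nonadjacent pairs you need $\langle e_s,e_t\rangle<-1$, not $|\langle e_s,e_t\rangle|>1$. The local-to-global quasiconvexity argument is never specified.

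\textbf{The missing idea} is to tilt both $\widehat H_1$ and $\widehat H_2$ out of $\mathbb{H}^4$, each about its own trace $H_j=\widehat H_j\cap\mathbb{H}^4$. Concretely, replace $e_j$ by $e_j'=\cos\theta_j\,e_j+\sin\theta_j\,e_0$.
\begin{itemize}
\item Every wall orthogonal to $H_j$ has normal orthogonal to both $e_j$ and $e_0$, so it stays orthogonal to the tilted wall automatically.
\item $\langle e_1',e_2'\rangle=\sin\theta_1\sin\theta_2$, which equals $-\cos(\pi/2n)$ for suitable $\theta_1,\theta_2$ of opposite signs. Tilting only one wall would leave the angle at $\pi/2$.
\item In the paper's picture $\mathbb{H}^4\subset\mathbb{S}^4=\partial_\infty\mathbb{H}^5$, the tilted wall is exactly the round hypersphere extending a component of $\partial N_j$. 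That component is the equidistant hypersurface at distance $L$ from $H_j$, with $\cosh L=1/\cos\theta_j$.
\item For a wall $H_t$ disjoint from $H_j$, $\langle e_t,e_j'\rangle=-\cos\theta_j\cosh d(H_t,H_j)$. So disjointness survives if and only if $d(H_t,H_j)>L$.
\end{itemize}
The paper arranges $d(H_t,H_j)>L$ by building $P$ from coarse convex hulls in the $120$-cell tiling, so that walls of $P$ not meeting $H_j$ avoid $N_j$. The condition that matters is on walls \emph{disjoint} from $H_j$, not on the neighbours of $H_2$ as in your proposal.

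Convex cocompactness then needs no local-to-global argument. Unfolding $P_n$ by the dihedral group of the two tilted reflections gives a polyhedron whose walls are pairwise orthogonal or disjoint, and Desgroseilliers--Haglund applies.
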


\begin{proof}[Proof~of~Proposition~\ref{convexcocompact}]
We may assume $n \geq 2$. Let $P^{120} \subset \mathbb{H}^4$ be a right-angled $120$-cell. Then $P^{120}$ induces a tiling of $\mathbb{H}^4$, as well as a tiling of each wall of $P^{120}$ by right-angled dodecahedra. Given a subset $K \subset \mathbb{H}^4$ (respectively, a subset $K \subset H$ of a wall $H$ of $P^{120}$), denote by $\mathrm{CCH}(K)$ (resp., by $\mathrm{CCH}_H(K)$) the intersection of all closed half-spaces in our tiling of $\mathbb{H}^4$ (resp., in our tiling of $H$) containing $K$. Here, ``CCH'' stands for ``coarse convex hull.'' We will use the fact that coarse convex hulls of compact sets with nonempty interior are compact right-angled polyhedra; see \cite[\S3.1]{zbMATH02081321}.

Fix two orthogonal walls $H_1, H_2$ of $P^{120}$. For $j=1,2$, let $N_j$ be the closed metric $L$-neighborhood of $H_j$ in $\mathbb{H}^4$, where $L=L(n)$ is chosen such that either component of $\partial N_1$ forms an angle of $\frac{\pi}{2n}$ with either component of $\partial N_2$; note that the existence of such $L$ is guaranteed by the intermediate value theorem. Note also that, for $j=1,2$, each wall in our tiling of $\mathbb{H}^4$ that is orthogonal to $H_j$ is also orthogonal to either component of $\partial N_j$. 

For $j=1,2$, let $Q_j \subset H_j$ be the infinite-volume $3$-dimensional polyhedron bounded by all geodesic hyperplanes in $H_j$ of the form $H \cap H_j$ where $H$ is a wall of $P^{120}$ orthogonal to both $H_1$ and $H_2$, and set $K_j := Q_j \cap N_{3-j} \subset H_j$. Then~$K_j$ is compact, and so $\mathrm{CCH}_{H_j}(K_j) \subset H_j$ is a compact right-angled $3$-dimensional polyhedron. Now view $\mathrm{CCH}_{H_j}(K_j)$ as an infinite-volume polyhedron $C_j \subset \mathbb{H}^4$ all of whose walls are orthogonal to $H_j$. Then $C_j \cap N_j$ is compact, and hence $R := \mathrm{CCH}(C_1 \cap C_2\cap(N_1 \cup N_2)) \subset \mathbb{H}^4$ is a compact right-angled polyhedron. For $j=1,2$, pick a half-space $H_j^\rightarrow$ bounded by $H_j$, and let $P:= R \cap H_1^\rightarrow \cap H_2^\rightarrow \subset \mathbb{H}^4$. Then $P$ is also a compact right-angled polyhedron, four copies of which tile $R$. Let~$H'_1$ be the component of $\partial N_1$ contained in $H_1^\rightarrow$, and $H'_2$ the component of $\partial N_2$ that is {\em not} contained in $H_2^\rightarrow$. Viewing $\mathbb{H}^4 \subset \mathbb{S}^4$ via the Poincar\'e ball model, if we replace the walls $H_1$ and $H_2$ of $P$ with $H'_1$ and $H'_2$, we obtain a $4$-dimensional polyhedron $P_n \subset \mathbb{S}^4$ bounded by round hyperspheres of $\mathbb{S}^4$ (see Figure~\ref{fig:dimensiontwo}). By the Poincar\'e polyhedron theorem~\cite[\S13.5]{zbMATH05070843} applied to the infinite-volume $5$-dimensional hyperbolic polyhedron determined by $P_n$, the subgroup $\Gamma_{P_n} < \mathrm{Conf}(\mathbb{S}^4) = \mathrm{Isom}(\mathbb{H}^5)$ generated by the inversions in the walls of $P_n$ is then naturally isomorphic to $W^n(P, H_1, H_2)$. To see that $\Gamma_{P_n}$ is convex cocompact, let $R_n \subset \mathbb{S}^4$ be the union of the images of $P_n$ under the finite dihedral group generated by the inversions in $H'_1$ and $H'_2$. Then $R_n$ is a right-angled polyhedron bounded by finitely many round hyperspheres of $\mathbb{S}^4$. Since any two walls of $P_n$ neither of which is $H'_1$ or $H'_2$ are either orthogonal or disjoint, and since each wall of $P_n$ that meets either $H'_1$ or~$H'_2$ does so orthogonally, we have that any two walls of $R_n$ are either orthogonal or disjoint. It follows that the group $\Gamma_{R_n}$ generated by the inversions in the walls of $R_n$ is convex cocompact\footnote{See, for instance, \cite[Thm.~4.7]{zbMATH06206253}; note that, while the referenced theorem states that the group generated by the reflections in the walls of a finite-sided hyperbolic Coxeter polyhedron is convex cocompact if and only if no two {\em faces} of the polyhedron (of arbitrary codimension) are asymptotic, the latter hypothesis is equivalent to the absence of asymptotic {\em walls} in the right-angled case.}, and hence so is the finite-index supergroup $\Gamma_{P_n}$ of~$\Gamma_{R_n}$. 
\end{proof}

\begin{figure}[p]
    
    \includegraphics[trim=1.35in 0.4in 1in 0in, clip, scale=0.75]{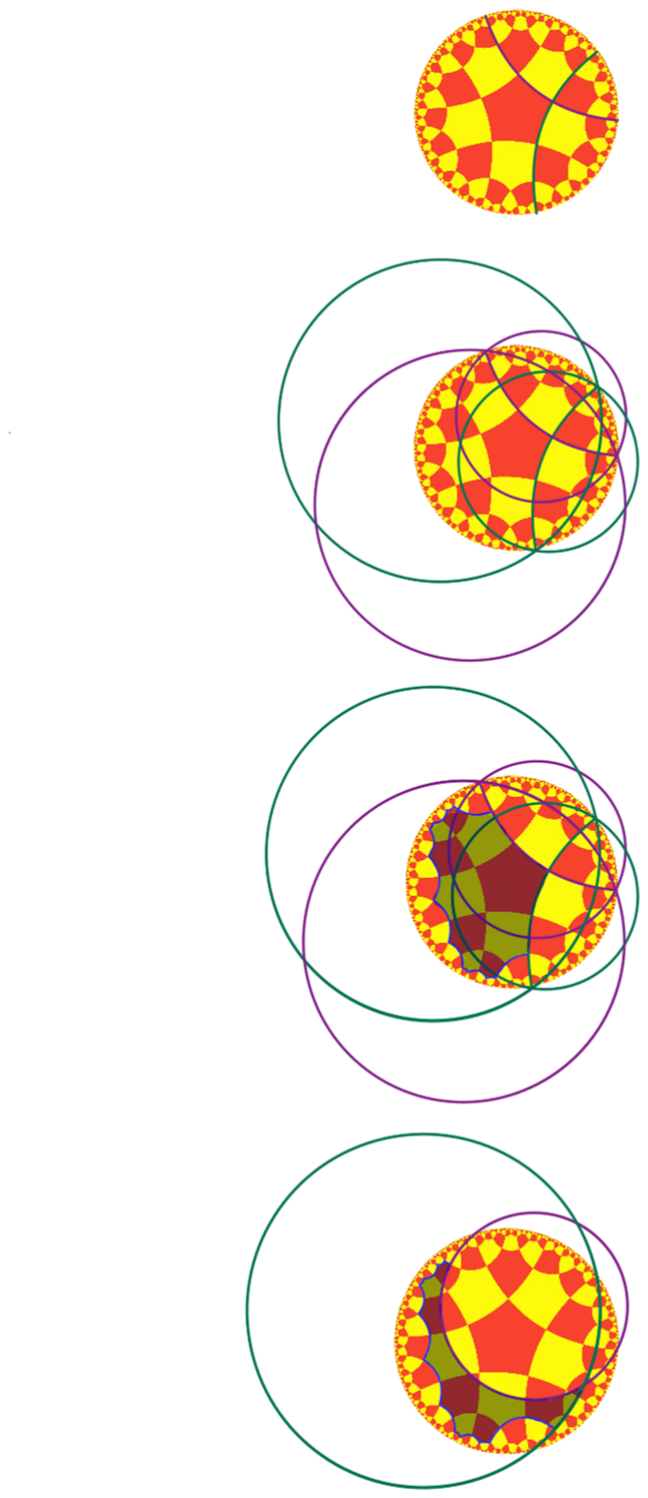} 
    \caption{A visualization, two dimensions down, of the construction of $P_n$ in the proof of Proposition~\ref{convexcocompact}.}
    \label{fig:dimensiontwo}
\end{figure}

The subgroup $\Gamma_{R_n} < \mathrm{Isom}(\mathbb{H}^5)$ from the proof of Proposition~\ref{convexcocompact} is thus a convex cocompact right-angled reflection group none of whose finite-index subgroups embed discretely in $\mathrm{Isom}(\mathbb{H}^4)$ by Proposition~\ref{nothyperbolic}, and we claim that the limit set $\Lambda \subset \partial_\infty\mathbb{H}^5$ of $\Gamma_{R_n}$ is homeomorphic to $S^3$. Indeed, by convex cocompactness of  $\Gamma_{P_n}$, we have in particular that the abstract Coxeter group $W^n(P, H_1, H_2)$ is Gromov-hyperbolic, and hence the CAT(0) boundary of the Davis complex associated to $W^n(P, H_1, H_2)$ is homeomorphic to the Gromov boundary of $W^n(P, H_1, H_2)$. The former is homeomorphic to $S^3$, and the latter is homeomorphic to the limit set in~$\partial _\infty\mathbb{H}^5$ of $\Gamma_{P_n}$ by convex cocompactness of $\Gamma_{P_n}$. But the latter limit set is nothing but $\Lambda$ since $\Gamma_{R_n}$ is a finite-index subgroup of~$\Gamma_{P_n}$.
\end{proof}

\begin{remark}\label{gromovthurston}
If we regard the polyhedra $R$ and $R_n$ in the proof of Proposition~\ref{convexcocompact} as reflection orbifolds, then $R_n$ is obtained as an $n$-fold cyclic cover of $R$ branched over $R \cap H_1 \cap H_2$ (note that $R \cap H_1 \cap H_2$ is nothing but the codimension-$2$ face of~$P$ contained in $H_1 \cap H_2$), hence the title of this note. One can then construct a negatively curved Riemannian metric on the reflection orbifold $R_n$ following~\cite[\S2]{zbMATH04054503}, and indeed, by keeping the $H'_j$ fixed in the proof of Proposition~\ref{convexcocompact} but taking larger and larger metric neighborhoods $N_j$ in the definition of $R$ (and hence in the definitions of $P$ and $R_n$), one can arrange for a negatively curved metric that is arbitrarily pinched. It also seems likely that, by following the same strategy of keeping the $H'_j$ fixed but taking larger and larger neighborhoods $N_j$, one can bring the Hausdorff dimension of the limit set $\Lambda$ arbitrarily close to $3$, though this quantity will always be strictly larger than $3$ by~\cite[Thm.~1.5]{zbMATH00880147} and Proposition~\ref{nothyperbolic}.
\end{remark}

\begin{remark}
By taking $L$ in the proof of Proposition~\ref{convexcocompact} such that the $H'_j$ instead form an angle of $\pi/3$, it is not difficult to see that the output reflection group $\Gamma_{P_n}$ and hence also the subgroup $\Gamma_{R_n}$ are contained in a cocompact arithmetic subgroup of $\mathrm{Isom}(\mathbb{H}^5)\cong \mathrm{PO}(5,1)$, namely, some conjugate $\Gamma$ of the lattice $\mathrm{PO}(f; \mathbb{Z}[\varphi])$, where~$\varphi$ denotes the golden ratio and $f$ is the quadratic form \[x_1^2 + x_2^2+x_3^2 + x_4^2 + x_5^2 -\varphi x_6^2\] on~$\mathbb{R}^6$. By keeping the $H'_j$ fixed but varying the $N_j$ as in Remark~\ref{gromovthurston}, one obtains an abundance of ``Gromov--Thurston'' reflection subgroups of the fixed lattice $\Gamma$, indeed, probably infinitely many even up to wide commensurability within $\mathrm{Isom}(\mathbb{H}^5)$. Note that, even though it was assumed in Proposition~\ref{nothyperbolic} that $m_{s_1s_2}$ was even, so that we are in principle not allowed to take the $H'_j$ forming an angle that is an {\em odd} submultiple of $\pi$, by the symmetry of the right-angled $120$-cell $P^{120}$, the proof of Proposition~\ref{nothyperbolic} still applies to show that no finite-index subgroup of $\Gamma_{P_n}$ embeds discretely in $\mathrm{Isom}(\mathbb{H}^4)$. 

The inclusion of $\Gamma_{P_n}$ in an arithmetic lattice is useful from the following perspective.   
By aforementioned work of Bergeron--Haglund--Wise~\cite{bergeron2011hyperplane}, the subgroup~$\Gamma_{P_n}$ is separable\footnote{Separability of $\Gamma_{P_n}$ in $\Gamma$ also follows from~\cite{haglund2010coxeter} in light of the fact that the lattice $\Gamma$ also happens to be reflective~\cite{zbMATH03910486}.} in $\Gamma$. By Scott's separability criterion~\cite[Lem.~1.4]{zbMATH03640531}, there is then a finite-index subgroup $\hat\Gamma < \Gamma$ containing $\Gamma_{P_n}$ such that the $1$-neighborhood of the convex core of $\Gamma_{P_n} \backslash \mathbb{H}^5$ embeds in $\hat\Gamma \backslash \mathbb{H}^5$. By cutting $\hat\Gamma \backslash \mathbb{H}^5$ along a boundary component of the convex core of $\Gamma_{P_n} \backslash \mathbb{H}^5$ and attaching the appropriate ends of $\Gamma_{P_n} \backslash \mathbb{H}^5$ to the resulting orbifold, one obtains an infinite-volume convex cocompact hyperbolic orbifold $\Delta \backslash \mathbb{H}^5$ with the property that the limit set $\Lambda_\Delta \subset \partial_\infty\mathbb{H}^5$ of $\Delta$ is a $3$-dimensional Sierpi\'nski compactum in the sense of Cannon~\cite{zbMATH03413525} each of whose peripheral $3$-spheres is the image of the limit set $\Lambda$ of $\Gamma_{P_n}$ under some conformal map of $\partial_\infty \mathbb{H}^5$. In summary, one obtains in this manner convex cocompact subgroups $\Delta < \mathrm{Isom}(\mathbb{H}^5)$ whose limit set $\Lambda_\Delta \subset \partial_\infty \mathbb{H}^5$ is a $3$-dimensional Sierpi\'nski compactum, but such that the stabilizer in $\Delta$ of each peripheral $3$-sphere of $\Lambda_\Delta$ fails to admit a discrete embedding (even up to finite index) into $\mathrm{PO}(5,1)$ stabilizing a {\em round} $3$-sphere in $\partial_\infty \mathbb{H}^5$. By contrast, any convex cocompact subgroup of $\mathrm{Isom}(\mathbb{H}^3)$ with limit set a Sierpi\'nski curve admits a convex cocompact representation into $\mathrm{Isom}(\mathbb{H}^3)$ the peripheral circles of whose limit set are all round~{\cite[\S4]{zbMATH04138789}}.

After having observed the above, we noticed that the reflection subgroup of $\mathrm{Isom}(\mathbb{H}^5)$ corresponding to the diagram $\mathcal{Q}_7$ with $p=q=7$ in~\cite[\S A.2]{MR3968766} is contained in a cocompact arithmetic subgroup of $\mathrm{Isom}(\mathbb{H}^5)$, since the Galois conjugates of the Gram matrix of $\mathcal{Q}_7$ under both nonidentity real embeddings of $\mathbb{Q}(\cos(\pi/7))$ are positive-definite. We thank Gye-Seon Lee for verifying the latter computation. The reflection group corresponding to $\mathcal{Q}_7$ is another example of a convex cocompact subgroup of $\mathrm{Isom}(\mathbb{H}^5)$ none of whose finite-index subgroups admit discrete embeddings into $\mathrm{Isom}(\mathbb{H}^4)$, and thus can alternatively be used to construct convex cocompact subgroups $\Delta < \mathrm{Isom}(\mathbb{H}^5)$ as above.
\end{remark}

\subsection*{Acknowledgements} I am grateful to Gye-Seon Lee and Franco Vargas Pallete for helpful discussions.

\bibliography{biblio}{}
\bibliographystyle{siam}

\end{document}